\begin{document}

\title[$P_{\aleph_{1}}$-Proximities]%
{A Generalization of the Notion of a $P$-space to Proximity Spaces}

\author{Joseph Van Name}
\address{Department of Mathematics \& Statistics; University of South Florida;
Tampa, Florida, 33620}
\email{jvanname@mail.usf.edu}


\subjclass[2010]{Primary 54E05; Secondary 28A05,54G10}

\keywords{Proximity space, Baire $\sigma$-algebra, $P$-space}

\newtheorem{theorem}{Theorem}[section]
\newtheorem{lemma}[theorem]{Lemma}
\newtheorem{proposition}[theorem]{Proposition}
\newtheorem{corollary}[theorem]{Corollary}
\theoremstyle{definition}
\newtheorem{definition}[theorem]{Definition}

\newtheorem{remark}[theorem]{Remark}
\newtheorem{example}[theorem]{Example}
\numberwithin{equation}{section}

\begin{abstract}
In this note, we shall generalize the notion of a $P$-space to proximity
spaces and investigate the basic properties of these proximities.
We therefore define a $P_{\aleph_{1}}$-proximity to be a proximity where if $A_{n}\prec B$ for all $n\in\mathbb{N}$,
then $\bigcup_{n}A_{n}\prec B$. It turns out that the class of $P_{\aleph_{1}}$-proximities is equivalent to
the class of $\sigma$-algebras. Furthermore, the $P_{\aleph_{1}}$-proximity coreflection of a proximity space is the
$\sigma$-algebra of proximally Baire sets.
\end{abstract}

\maketitle

\section{\bf Introduction}
We begin by reviewing basic facts on proximity spaces without proofs. All our preliminary
information on proximity spaces can be found in \cite{N}. In this paper, we shall assume all
proximity spaces are separated and all topologies are Tychonoff.
If $\delta$ is a relation, then we shall write $\overline{\delta}$
for the negation of the relation $\delta$. In other words, we have $R\overline{\delta}S$ if and only
if we do not have $R\delta S$. The complement of a subset $A$ of a set $X$ will be denoted by
$A^{c}$.

A \textit{proximity space} is a pair $(X,\delta)$ where $X$ is a set and $\delta$ is a
relation on the power set $P(X)$ that satisfies the following axioms.

1. $A\delta B$ implies $B\delta A$.

2. $(A\cup B)\delta C$ if and only if $A\delta C$ or $B\delta C$.

3. If $A\delta B$, then $A\neq\emptyset$ and $B\neq\emptyset$.

4. If $A\overline{\delta}B$, then there is a set $E$ such that $A\overline{\delta}E$ and
$E^{c}\overline{\delta}B$.

5. If $A\cap B\neq\emptyset$, then $A\delta B$.

A proximity space is \textit{separated} if and only if $\{x\}\delta\{y\}$ implies $x=y$. 

Intuitively, $A\delta B$ whenever the set $A$ touches the set $B$ in some sense. Therefore
a proximity space is a set with a notion of whether two sets are infinitely close to each
other.

If $(X,\delta)$ is a proximity space, then let $\prec_{\delta}$ (or, simply, $\prec$) be the binary relation on
$P(X)$ where $A\prec B$ if and only if $A\overline{\delta}B^{c}$. The relation $\prec$ satisfies the following.

1. $X\prec X$.

2. If $A\prec B$, then $A\subseteq B$.

3. If $A\subseteq B,C\subseteq D,B\prec C$, then $A\prec D$.

4. If $A\prec B_{i}$ for $i=1,\ldots,n$, then $A\prec\bigcap_{i=1}^{n}B_{i}$.

5. If $A\prec B$, then $B^{c}\prec A^{c}$.

6. If $A\prec B$, then there is some $C$ with $A\prec C\prec B$.

If $\prec$ satisfies $1-6$ and if we define $\delta$ by letting
$A\delta B$ if and only if $A\prec B^{c}$, then $(X,\delta)$ is a proximity space.

If $(X,\delta)$ is a proximity space, then we put a topology $\tau_{\delta}$ on $X$ by letting
$\overline{A}=\{x|x\delta A\}$. A set $U\subseteq X$ is open if and only if $\{x\}\prec U$ whenever
$x\in U$. 

If $(X,\delta)$ and $(Y,\rho)$ are proximity spaces, then a function $f:X\rightarrow Y$ is a \textit{proximity
map} if $f(A)\rho f(B)$ whenever $A\delta B$. It can easily be shown that $f$ is a proximity map
if and only if $f^{-1}(C)\overline{\delta}f^{-1}(D)$ whenever $C\overline{\rho}D$. Furthermore, $f$ is a proximity
map if and only if $f^{-1}(C)\prec_{\delta}f^{-1}(D)$ whenever $C\prec_{\rho}D$. Every proximity map is continuous.

\begin{example}
If $X$ is a set, then $\delta$ is the discrete proximity if $A\overline{\delta}B$ whenever
$A\cap B=\emptyset$.
\end{example}
\begin{example}
Let $(X,\tau)$ be a Tychonoff space. Let $A\overline{\delta}B$ if there is a continuous function 
$f:(X,\tau)\rightarrow[0,1]$ with $f(A)\subseteq\{0\},f(B)\subseteq\{1\}$. Then $(X,\delta)$ is a proximity space
that induces the topology on $X$ (i.e., $\delta$ is \emph{compatible} with $\tau$). It is well known that a topology $X$ is induced by some
proximity if and only if $X$ is Tychonoff.
\end{example}
\begin{example}
Compact spaces have a unique compatible proximity where $A\delta B$ if and only if $\overline{A}\cap\overline{B}\neq\emptyset.$
Furthermore, if $(X,\delta)$ is a compact proximity space, and $(Y,\rho)$ is a proximity space, then
a map $f:X\rightarrow Y$ is continuous if and only if $f$ is a proximity map.
\end{example}

If $(X,\delta)$ is a proximity space and $Y\subseteq X$, then define a relation $\delta_{Y}$
on $P(Y)$ by letting $A\delta_{Y}B$ if and only if $A\delta B$. Then $\delta_{Y}$ is a proximity on $Y$
that induces the subspace topology on $Y$ called the induced proximity.

If $(X,\delta)$ is a proximity space, then $A\overline{\delta}B$ if and only if there is a proximity map
$g:(X,\delta)\rightarrow[0,1]$ with $g(A)\subseteq\{0\},g(B)\subseteq\{1\}$.

If $(X,\delta)$ is a proximity space, then there is a unique compactification $\mathcal{C}$ of $X$
where $A\delta B$ if and only if $(\textrm{cl}_{\mathcal{C}}A)\cap(\textrm{cl}_{\mathcal{C}}B)\neq\emptyset$. This compactification is called the \textit{Smirnov compactification} of $X$ and the proximity $\delta$ is the proximity induced by the unique
proximity on the compact space $\mathcal{C}$. If $X$ is a Tychonoff space,
then the proximity spaces that induce the topology on $X$ are in a one-to-one correspondence with
the compactifications of $X$. If $(X,\delta),(Y,\rho)$ are proximity spaces and
$\mathcal{C}$ is the Smirnov compactification of $X$ and $\mathcal{D}$ is the Smirnov compactification
of $Y$, then a function $f:X\rightarrow Y$ is a proximity map if and only if $f$ has a unique extension
to a continuous map from $\mathcal{C}$ to $\mathcal{D}$.

An algebra of sets $(X,\mathcal{M})$ is reduced if and only if whenever $x,y\in X$ are distinct
then there is an $R\in\mathcal{M}$ with $x\in R,y\in R^{c}$. We assume that all algebras
of sets are reduced. If $X$ is a topological space, then a \textit{zero set} is a set of the form
$f^{-1}(0)$ where $f:X\rightarrow\mathbb{R}$ is continuous. The union of finitely many zero sets
is a zero set, and the intersection of countably many zero sets is a zero set.
A \textit{$P$-space} is a Tychonoff space where every $G_{\delta}$-set is open. It is
well known and it can easily be shown that a Tychonoff space is a $P$-space if and only if
every zero set is open. If $X$ is a Tychonoff space, then the \textit{$P$-space coreflection} $(X)_{\aleph_{1}}$ is the space with underlying set $X$ and where the $G_{\delta}$-sets
in $X$ form a basis for the topology on $(X)_{\aleph_{1}}$.

\section{\bf $P_{\aleph_{1}}$-proximities}

A separated proximity space $(X,\delta)$ is a \textit{$P_{\aleph_{1}}$-proximity space}
if whenever $A_{n}\subseteq X$ for $n\in\mathbb{N}$ and $B\subseteq X$
and $\bigcup_{n=0}^{\infty}A_{n}\delta B$, then $A_{n}\delta B$ for
some $n$. In other words, $X$ is a $P_{\aleph_{1}}$-proximity space if and only if whenever
$A_{n}\prec B$ for each natural number $n$, then $\bigcup_{n}A_{n}\prec B$. Equivalently,
$X$ is a $P_{\aleph_{1}}$-proximity if and only if whenever $A\prec B_{n}$ for all $n$, then
$A\prec\bigcap_{n}B_{n}$. 

A proximity space $(X,\delta)$ is said to be \textit{zero-dimensional}
if and only if whenever $A\overline{\delta}B$, then there is a $C\subseteq X$ with
$A\overline{\delta}C,B\overline{\delta}C^{c},C\overline{\delta}C^{c}$. In other words,
$(X,\delta)$ is zero-dimensional if and only if whenever $A\prec B$ there is a $C$ with
$A\prec C\prec C\prec B$. If $(X,\delta)$ is a proximity space, then let $\mathcal{M}_{\delta}
=\{R\subseteq X|R\overline{\delta}R^{c}\}=\{R\subseteq X|R\prec R\}$. If $(X,\mathcal{M})$ is an algebra
of sets, then let $\delta_{\mathcal{M}}$ be the relation on $P(X)$ where
$U\overline{\delta_{\mathcal{M}}}V$ if and only if there is some $R\in\mathcal{M}$ with
$U\subseteq R,V\subseteq R^{c}$. 

\begin{theorem}
\begin{enumerate}
\item If $(X,\delta)$ is a proximity space, then $(X,\mathcal{M}_{\delta})$ is an
algebra of sets.
\item If $(X,\mathcal{M})$ is an algebra of sets, then $(X,\delta_{\mathcal{M}})$ is a zero-dimensional
proximity space.
\item If $\delta$ is a zero-dimensional proximity, then $\delta_{\mathcal{M}_{\delta}}=\delta$.
\item If $(X,\mathcal{M})$ is an algebra of sets, then
$\mathcal{M}=\mathcal{M}_{\delta_{\mathcal{M}}}$.
\item If $(X,\delta)$ is a zero-dimensional proximity space, then $\mathcal{M}_{\delta}$
is a basis for the topology on $X$.

\end{enumerate}
\label{BezDual}
\end{theorem}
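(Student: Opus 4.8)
The plan is to verify the five items in turn, treating (1), (4), and the "algebra" part of (2) as routine axiom-chasing, and spending most of the effort on the less mechanical implications. For (1), I would check that $X\prec X$ gives $X\in\mathcal{M}_\delta$, that $R\prec R$ iff $R^c\prec R^c$ (axiom 5 for $\prec$) gives closure under complement, and that $R\prec R$, $S\prec S$ imply $R\cup S\prec R\cup S$: from $R\prec R\subseteq R\cup S$ and $S\prec S\subseteq R\cup S$ we get $R\prec R\cup S$ and $S\prec R\cup S$, and then axiom 2 for $\delta$ (equivalently, $(R\cup S)\delta(R\cup S)^c$ iff $R\delta(R\cup S)^c$ or $S\delta(R\cup S)^c$) finishes it. Reducedness of $\mathcal{M}_\delta$ follows from separatedness: if $x\neq y$ then $\{x\}\overline\delta\{y\}$, so by axiom 4 there is $E$ with $\{x\}\overline\delta E$ and $E^c\overline\delta\{y\}$; one massages this (using axiom 6 / the strong axiom for $\prec$, i.e.\ the zero-dimensionality-style interpolation is \emph{not} available yet, so instead I would use axiom 4 twice) to produce an actual $R\in\mathcal M_\delta$ separating $x$ and $y$. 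This last point — getting a \emph{clopen-for-$\delta$} separator rather than merely an open one — is the one genuinely delicate step in (1), and I expect to lean on iterating axiom 4 together with a union/intersection argument.

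For (2), checking that $\delta_{\mathcal M}$ satisfies proximity axioms 1--3 and 5 is immediate from the definition; axiom 4 and zero-dimensionality are really the same statement here: if $U\overline{\delta_{\mathcal M}}V$, witnessed by $R\in\mathcal M$ with $U\subseteq R$, $V\subseteq R^c$, then $R$ itself serves as the set $C$ (note $R\overline{\delta_{\mathcal M}}R^c$ because $R\in\mathcal M$), giving both the separation axiom and zero-dimensionality at once. Item (4) is a short double inclusion: $R\in\mathcal M_{\delta_{\mathcal M}}$ means $R\overline{\delta_{\mathcal M}}R^c$, i.e.\ there is $S\in\mathcal M$ with $R\subseteq S$ and $R^c\subseteq S^c$, which forces $R=S\in\mathcal M$; conversely $R\in\mathcal M$ trivially witnesses $R\overline{\delta_{\mathcal M}}R^c$.

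The main work is (3). One inclusion is easy: if $A\,\overline{\delta_{\mathcal M_\delta}}\,B$ then there is $R$ with $R\prec R$, $A\subseteq R$, $B\subseteq R^c$, whence $A\prec R\prec B^c$ gives $A\overline\delta B$. The reverse inclusion — $A\overline\delta B\Rightarrow A\,\overline{\delta_{\mathcal M_\delta}}\,B$ — is where zero-dimensionality of $\delta$ is used essentially: from $A\prec B^c$, zero-dimensionality gives $C$ with $A\prec C\prec C\prec B^c$, so $C\in\mathcal M_\delta$ and $C$ witnesses $A\,\overline{\delta_{\mathcal M_\delta}}\,B$. I expect this to be the crux of the theorem, though it is short once the definition of zero-dimensional is unwound. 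Finally, for (5): given $x\in U$ with $U$ open in $\tau_\delta$, we have $\{x\}\prec U$, and by (3) (using zero-dimensionality) $\{x\}\,\overline{\delta}\,U^c$ is witnessed by some $R\in\mathcal M_\delta$ with $x\in R\subseteq U$; since each $R\in\mathcal M_\delta$ satisfies $R\prec R$, each such $R$ is open, and these $R$ therefore form a basis. The only subtlety to watch throughout is keeping straight which axioms are available for a bare proximity versus a zero-dimensional one, and making sure (3) and (5) invoke zero-dimensionality and not some stronger interpolation property.
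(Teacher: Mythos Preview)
The paper does not prove this theorem; it simply cites Bezhanishvili~\cite{B}. Your plan is the standard direct verification and is essentially what one would find there, so in that sense there is nothing to compare beyond noting that you are supplying details the paper omits.

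There is, however, one genuine problem in your outline. In part~(1) you try to show that $\mathcal{M}_\delta$ is \emph{reduced} for an arbitrary separated proximity $(X,\delta)$, calling this ``the one genuinely delicate step'' and proposing to iterate axiom~4. This step cannot succeed: for a connected compact space such as $[0,1]$ with its unique compatible proximity, $R\prec R$ forces $\overline{R}\cap\overline{R^{c}}=\emptyset$, so $R$ is clopen, and hence $\mathcal{M}_\delta=\{\emptyset,[0,1]\}$, which is not reduced. The paper's blanket convention ``we assume all algebras of sets are reduced'' should be read as a hypothesis on algebras taken as input (as in items~(2) and~(4)), not as a claim about $\mathcal{M}_\delta$ for a general proximity. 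Item~(1) is only asserting the closure properties (contains $X$, closed under complement and finite union), which your first three checks handle correctly; reducedness of $\mathcal{M}_\delta$ only follows once $\delta$ is zero-dimensional, and that is exactly the content of item~(5). So drop the reducedness argument from~(1); the rest of your plan for (2)--(5) is correct.
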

\begin{proof}
See \cite{B}.
\end{proof}
\begin{theorem}
Let $(X,\mathcal{M}),(Y,\mathcal{N})$ be algebras of sets. Then a mapping
$f:(X,\delta_{\mathcal{M}})\rightarrow(Y,\delta_{\mathcal{N}})$ is a proximity map if and only if $f^{-1}(U)\in\mathcal{M}$ for each 
$U\in\mathcal{N}$.

\label{ProxMeasTran}
\end{theorem}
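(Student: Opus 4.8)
The plan is to unwind the definitions on both sides. Recall that a map $f\colon(X,\delta_{\mathcal M})\to(Y,\delta_{\mathcal N})$ is a proximity map precisely when $f^{-1}(C)\prec_{\delta_{\mathcal M}}f^{-1}(D)$ whenever $C\prec_{\delta_{\mathcal N}}D$, and that by the definition of $\delta_{\mathcal N}$ we have $C\prec_{\delta_{\mathcal N}}D$ (i.e.\ $C\,\overline{\delta_{\mathcal N}}\,D^{c}$) exactly when there is an $R\in\mathcal N$ with $C\subseteq R\subseteq D$; similarly for $\delta_{\mathcal M}$ on $X$. So the statement to be proved reduces to: $f^{-1}$ carries sets ``squeezed between'' a member of $\mathcal N$ to sets squeezed between a member of $\mathcal M$, if and only if $f^{-1}(U)\in\mathcal M$ for every $U\in\mathcal N$.

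First I would do the easy direction. Assume $f^{-1}(U)\in\mathcal M$ for all $U\in\mathcal N$. Given $C\prec_{\delta_{\mathcal N}}D$, pick $R\in\mathcal N$ with $C\subseteq R\subseteq D$. Then $f^{-1}(C)\subseteq f^{-1}(R)\subseteq f^{-1}(D)$ and $f^{-1}(R)\in\mathcal M$, so $f^{-1}(C)\prec_{\delta_{\mathcal M}}f^{-1}(D)$; hence $f$ is a proximity map. Second, for the converse, assume $f$ is a proximity map and let $U\in\mathcal N$. Since $U\in\mathcal N$ we have $U\,\overline{\delta_{\mathcal N}}\,U^{c}$ (take $R=U$ in the definition), i.e.\ $U\prec_{\delta_{\mathcal N}}U$. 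Proximity-mapness then gives $f^{-1}(U)\prec_{\delta_{\mathcal M}}f^{-1}(U)$, which by Theorem \ref{BezDual}(4) (or directly: $\mathcal M=\mathcal M_{\delta_{\mathcal M}}=\{R\mid R\prec_{\delta_{\mathcal M}}R\}$) says exactly $f^{-1}(U)\in\mathcal M$. This completes both directions.

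There is essentially no serious obstacle here; the only thing requiring a moment's care is making sure the characterization of proximity maps via $\prec$ is applied in the right direction, and recalling that $\mathcal M_{\delta_{\mathcal M}}=\mathcal M$ (Theorem \ref{BezDual}(4)) so that ``$f^{-1}(U)\prec f^{-1}(U)$'' and ``$f^{-1}(U)\in\mathcal M$'' are literally the same assertion. One should also note in passing that $f^{-1}$ preserves inclusions and commutes with complementation, which is what makes the first direction's squeezing argument go through. I would present the proof in the two short paragraphs sketched above.
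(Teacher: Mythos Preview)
Your proposal is correct and is essentially the same argument as the paper's: both directions unwind $\prec_{\delta_{\mathcal N}}$ and $\prec_{\delta_{\mathcal M}}$ via the ``squeeze by a member of the algebra'' characterization, using $U\prec U$ for $U\in\mathcal N$ in one direction and the existence of $R\in\mathcal N$ with $C\subseteq R\subseteq D$ in the other. The only cosmetic difference is that you present the two implications in the opposite order and make the appeal to $\mathcal M_{\delta_{\mathcal M}}=\mathcal M$ (Theorem~\ref{BezDual}(4)) explicit where the paper leaves it implicit.
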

\begin{proof}
$\rightarrow$ Assume that $f$ is a proximity map. For each
$R\in\mathcal{N}$ we have $R\prec R$, so $f^{-1}(R)\prec f^{-1}(R)$, and
thus $f^{-1}(R)\in\mathcal{M}$.

$\leftarrow$ Let $U,V\subseteq Y$, and let $U\prec V$. Then there is an
$R\in\mathcal{N}$ with $U\subseteq R\subseteq V$. Therefore
$f^{-1}(R)\in\mathcal{M}$, and $f^{-1}(U)\subseteq f^{-1}(R)\subseteq f^{-1}(V)$.
Therefore $f^{-1}(U)\prec f^{-1}(V)$, so $f$ is a proximity map.
\end{proof}

\begin{theorem}
Every $P_{\aleph_{1}}$-proximity space is a zero-dimensional proximity space.
\end{theorem}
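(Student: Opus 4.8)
The plan is to show that in a $P_{\aleph_1}$-proximity space, whenever $A \prec B$ we can find $C$ with $A \prec C \prec C \prec B$, i.e. $C \in \mathcal{M}_\delta$. The natural idea is to exploit axiom 6 for $\prec$ (interpolation) repeatedly to build a dense chain, and then use the countable additivity built into a $P_{\aleph_1}$-proximity to collapse that chain into a single set that sits halfway.

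More concretely, given $A \prec B$, I would construct a family $\{C_q : q \in \mathbb{Q} \cap [0,1]\}$ of subsets indexed by the dyadic rationals (or all rationals) in $[0,1]$ such that $C_0 = A$, $C_1 = B$, and $q < r$ implies $C_q \prec C_r$. This is the standard ``Urysohn-style'' interpolation: start with $A \prec B$, use axiom 6 to get $C_{1/2}$ with $A \prec C_{1/2} \prec B$, then recurse, at each stage inserting a new set strictly between two consecutive ones. Now set $C = \bigcup \{C_q : q < 1/2\}$ (union over rationals below $1/2$) or equivalently $\bigcap\{C_q : q > 1/2\}$. The key point: I want $C \prec C$, i.e. $C \prec$ itself; since $C$ is a countable union of the $C_q$ with $q < 1/2$, and each such $C_q \prec C_{1/2}$ (hence $C_q \prec C$ because $C_{1/2} \subseteq C_r$ for $1/2 < r$... wait, need care), the $P_{\aleph_1}$ property will let me conclude $C \prec$ (something containing $C$), and a dual argument using that $C$ is a countable \emph{intersection} handles the other side.

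Let me be careful about the direction. Enumerate rationals in $(0,1)$ as usual and build $C_q$ with $q<r \Rightarrow C_q \prec C_r$. Put $C = \bigcup_{q < 1/2} C_q$. For each $q < 1/2$ pick a rational $s$ with $q < s < 1/2$; then $C_q \prec C_s$ and $C_s \subseteq C$, so $C_q \prec C$ fails to immediately give what I want — rather, I get $C_q \prec C$ would need $C_s$ to be replaced by something. Actually from $C_q \prec C_s$ and $C_s \subseteq C$ we only get... hmm, $\prec$ is monotone in the upper argument (property 3), so $C_q \prec C_s \subseteq C$ gives $C_q \prec C$. Good: so $C_q \prec C$ for all rationals $q < 1/2$. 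By the $P_{\aleph_1}$ property (the ``$\bigcup A_n \prec B$'' formulation), $\bigcup_{q<1/2} C_q \prec C$, that is $C \prec C$. So $C \in \mathcal{M}_\delta$. It remains to check $A \prec C$ and $C \prec B$: $A = C_0$ and $C_0 \prec C_q$ for any rational $q \in (0,1/2)$, and $C_q \subseteq C$, so $A \prec C$; for $C \prec B$, note $C = \bigcup_{q<1/2} C_q \subseteq C_{1/2}$ wait I should instead observe $C \subseteq C_r$ for every rational $r > 1/2$ hmm that needs $C_q \subseteq C_r$ for $q < 1/2 < r$, which follows from $C_q \prec C_r \Rightarrow C_q \subseteq C_r$ (property 2). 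So $C \subseteq \bigcap_{r>1/2} C_r$; pick any single $r \in (1/2,1)$, then $C \subseteq C_r \prec C_1 = B$, hence $C \prec B$. Thus $A \prec C \prec C \prec B$, which is exactly the zero-dimensionality condition.

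The main obstacle I anticipate is purely bookkeeping: carrying out the interpolation recursion cleanly so that the ordering relation $q<r \Rightarrow C_q \prec C_r$ is preserved at every finite stage (this is routine but must be stated), and then being scrupulous about which monotonicity/interpolation axiom justifies each of the steps $C_q \prec C$ and $C \subseteq C_r$. There is no real conceptual difficulty — the countable additivity of $\prec$ in a $P_{\aleph_1}$-proximity is doing all the heavy lifting, replacing the ``countable intersection of open sets'' phenomenon that makes ordinary $P$-spaces zero-dimensional. One could even shortcut the full rational-indexed construction: it suffices to build a single sequence $A \prec \cdots \prec C_{n+1} \prec C_n \prec \cdots \prec C_1 \prec B$ together with a matching increasing sequence, but the symmetric two-sided chain indexed by rationals is the most transparent way to get both $C \prec C$ directly.
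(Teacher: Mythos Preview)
Your argument is correct and follows essentially the same approach as the paper: interpolate repeatedly between $A$ and $B$, take the union of the resulting chain, and use the $P_{\aleph_1}$ property to get $C\prec C$ and $C\prec B$. The paper in fact uses the simpler one-sided sequence $A=C_0\prec C_1\prec C_2\prec\cdots\prec B$ with $C=\bigcup_n C_n$ (the shortcut you mention in your last paragraph), so the full rational-indexed Urysohn family is unnecessary overhead here.
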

\begin{proof}
Let $X$ be a $P_{\aleph_{1}}$-proximity space. Assume $A\prec B$. Then
there is a sequence $(C_{n})_{n\in\mathbb{N}}$ of subsets of $X$ where
$A=C_{0}\prec C_{1}\prec...\prec C_{n}\prec...\prec B$. Therefore let
$C=\bigcup_{n\in\mathbb{N}}C_{n}$. Since $X$ is a $P_{\aleph_{1}}$-proximity space,
we have $A\prec C\prec B$. Furthermore, since $C_{n}\prec C$ for all $n$,
and $X$ is a $P_{\aleph_{1}}$-proximity space, we have $C=\bigcup_{n}C_{n}\prec C$.
Therefore $X$ is a zero-dimensional proximity space.
\end{proof}
\begin{theorem}
An algebra of sets $(X,\mathcal{M})$ is a $\sigma$-algebra if and only if
$(X,\delta_{\mathcal{M}})$ is a $P_{\aleph_{1}}$-proximity space.

\label{SigPProx}
\end{theorem}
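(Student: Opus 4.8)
The plan is to work throughout with the explicit description of the relation $\prec$ attached to $\delta_{\mathcal{M}}$. Unwinding the definition of $\delta_{\mathcal{M}}$, one has $U \prec_{\delta_{\mathcal{M}}} V$ if and only if $U\,\overline{\delta_{\mathcal{M}}}\,V^{c}$, which by definition means there is some $R\in\mathcal{M}$ with $U\subseteq R$ and $V^{c}\subseteq R^{c}$; the latter inclusion is just $R\subseteq V$. So $U\prec V$ precisely when there is $R\in\mathcal{M}$ with $U\subseteq R\subseteq V$. I would record this observation first, since both directions rest on it.

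For the forward implication, suppose $(X,\mathcal{M})$ is a $\sigma$-algebra and that $A_{n}\prec B$ for every $n\in\mathbb{N}$. By the observation, pick $R_{n}\in\mathcal{M}$ with $A_{n}\subseteq R_{n}\subseteq B$. Since $\mathcal{M}$ is closed under countable unions, $R:=\bigcup_{n}R_{n}\in\mathcal{M}$, and $\bigcup_{n}A_{n}\subseteq R\subseteq B$, so $\bigcup_{n}A_{n}\prec B$. Hence $(X,\delta_{\mathcal{M}})$ is a $P_{\aleph_{1}}$-proximity space.

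For the converse, suppose $(X,\delta_{\mathcal{M}})$ is a $P_{\aleph_{1}}$-proximity space. Since $(X,\mathcal{M})$ is already an algebra of sets by hypothesis, it suffices to show it is closed under countable unions. Given $R_{n}\in\mathcal{M}$ for $n\in\mathbb{N}$, put $B=\bigcup_{m}R_{m}$. For each $n$ the set $R_{n}$ itself witnesses $R_{n}\subseteq R_{n}\subseteq B$, so $R_{n}\prec B$ for all $n$. Applying the defining property of a $P_{\aleph_{1}}$-proximity gives $\bigcup_{n}R_{n}\prec B$, i.e.\ there is some $R\in\mathcal{M}$ with $\bigcup_{n}R_{n}\subseteq R\subseteq B=\bigcup_{n}R_{n}$; therefore $R=\bigcup_{n}R_{n}\in\mathcal{M}$, as desired.

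I do not expect any real obstacle here beyond spotting the right move in the converse: one must take the target set $B$ to be the union $\bigcup_{m}R_{m}$ itself, which is exactly what converts the trivial inclusions $R_{n}\subseteq\bigcup_{m}R_{m}$ into instances of $\prec$ to which the $P_{\aleph_{1}}$-hypothesis applies. (Alternatively one could phrase the converse through Theorem~\ref{BezDual}, which identifies $\mathcal{M}$ with $\mathcal{M}_{\delta_{\mathcal{M}}}=\{R\mid R\prec R\}$, but this detour is not needed.)
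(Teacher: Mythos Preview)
Your proof is correct and matches the paper's argument essentially line for line: both directions use the characterization $U\prec_{\delta_{\mathcal{M}}}V$ iff some $R\in\mathcal{M}$ satisfies $U\subseteq R\subseteq V$, and in the converse the paper likewise sets $B=\bigcup_{n}R_{n}$, observes $R_{n}\prec\bigcup_{n}R_{n}$, applies the $P_{\aleph_{1}}$-property, and reads off $\bigcup_{n}R_{n}\in\mathcal{M}$.
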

\begin{proof}
$\rightarrow$ Assume $(X,\mathcal{M})$ is a $\sigma$-algebra.
Assume that $A_{n}\prec B$ for all $n$. Then for each $n$ there is an
$C_{n}\in\mathcal{M}$ with $A_{n}\subseteq C_{n}\subseteq B$. Therefore
$\bigcup_{n}A_{n}\subseteq\bigcup_{n}C_{n}\subseteq B$, so $\bigcup_{n}A_{n}\prec B$.

$\leftarrow$ Assume that $\delta_{\mathcal{M}}$ is a $P_{\aleph_{1}}$-proximity space,
and let $R_{n}\in\mathcal{M}$ for each natural number $n$. Then $R_{n}\prec R_{n}\subseteq\bigcup_{n}R_{n}$,
so $\bigcup_{n}R_{n}\prec\bigcup_{n}R_{n}$, so $\bigcup_{n}R_{n}\in\mathcal{M}$.
Therefore $(X,\mathcal{M})$ is a $\sigma$-algebra.
\end{proof}
\begin{corollary}
\begin{enumerate}

\item The category of zero-dimensional proximity spaces with proximity maps is isomorphic to the category of
reduced algebras of sets with measurable maps (i.e. maps $f:(X,\mathcal{N})\rightarrow(Y,\mathcal{M})$ where if
$R\in\mathcal{M}$, then $f^{-1}(R)\in\mathcal{N}$).

\item Their corresponding subcategories of $P_{\aleph_{1}}$-proximity spaces and reduced $\sigma$-algebras are isomorphic as well.
\end{enumerate}
\end{corollary}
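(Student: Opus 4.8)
The plan is to prove the Corollary by assembling the pieces already established in the excerpt, so no genuinely new argument is needed — the work is organizational. For part (1), I would define two functors. The first sends a zero-dimensional proximity space $(X,\delta)$ to the algebra $(X,\mathcal{M}_\delta)$, which is a reduced algebra of sets by Theorem~\ref{BezDual}(1) (reducedness coming from separatedness, since $\{x\}\overline{\delta}\{y\}$ forces some $R\in\mathcal{M}_\delta$ separating $x$ and $y$). The second sends $(X,\mathcal{M})$ to $(X,\delta_{\mathcal{M}})$, which is zero-dimensional by Theorem~\ref{BezDual}(2). On morphisms both functors are the identity on underlying functions; Theorem~\ref{ProxMeasTran} is exactly the statement that a function $f$ is a proximity map between the $\delta_{\mathcal{M}}$-proximities if and only if it is measurable between the algebras, which shows each functor is well-defined on morphisms and, combined, that they are full and faithful. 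Functoriality (preservation of identities and composition) is immediate since the functors do not alter the underlying function.

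Next I would check the two composites are the identity. Starting from a zero-dimensional proximity $\delta$, Theorem~\ref{BezDual}(3) gives $\delta_{\mathcal{M}_\delta}=\delta$; starting from a reduced algebra $\mathcal{M}$, Theorem~\ref{BezDual}(4) gives $\mathcal{M}_{\delta_{\mathcal{M}}}=\mathcal{M}$. Since on objects the round trips are literal equalities and on morphisms both functors act as the identity on functions, the composites are literally the identity functors, not merely naturally isomorphic to them; hence the two categories are isomorphic.

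For part (2), I would restrict the isomorphism of part (1) to the relevant full subcategories and invoke Theorem~\ref{SigPProx}: under the correspondence $\mathcal{M}\leftrightarrow\delta_{\mathcal{M}}$, the algebra $\mathcal{M}$ is a $\sigma$-algebra precisely when $\delta_{\mathcal{M}}$ is a $P_{\aleph_1}$-proximity, and conversely every $P_{\aleph_1}$-proximity is zero-dimensional (proved just above Theorem~\ref{SigPProx}) with $\mathcal{M}_\delta$ then a $\sigma$-algebra by the same theorem applied to $\delta=\delta_{\mathcal{M}_\delta}$. So the isomorphism of part (1) carries the subcategory of $P_{\aleph_1}$-proximity spaces onto the subcategory of reduced $\sigma$-algebras and vice versa, and the restriction of an isomorphism of categories to corresponding full subcategories is again an isomorphism.

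There is essentially no hard step here — everything is bookkeeping on top of Theorems~\ref{BezDual}, \ref{ProxMeasTran}, and \ref{SigPProx}. The one point deserving a sentence of care is morphism-level well-definedness and the claim that the correspondence is an \emph{isomorphism} rather than merely an equivalence: I would emphasize that because the underlying-set functor is faithful on both sides and the object-level bijections are equalities, we genuinely get an isomorphism of categories. A secondary minor point is confirming that the subcategories in question are full, so that restricting the isomorphism makes sense — which holds since ``being a $\sigma$-algebra'' and ``being a $P_{\aleph_1}$-proximity'' are conditions on objects only.
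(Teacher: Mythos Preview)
Your proposal is correct and is exactly the approach the paper intends: the paper gives no explicit proof, treating the corollary as immediate from Theorems~\ref{BezDual}, \ref{ProxMeasTran}, and \ref{SigPProx} together with the fact that every $P_{\aleph_1}$-proximity is zero-dimensional, and your write-up simply spells out this bookkeeping. Your added remarks on why one obtains an isomorphism rather than a mere equivalence, and on fullness of the subcategories, are apt clarifications of points the paper leaves tacit.
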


Now given a proximity space $(X,\delta)$, we shall characterize the smallest $\sigma$-algebra $(X,\mathcal{M})$
such that the identity function from $(X,\delta_{\mathcal{M}})$ to $(X,\delta)$ is a proximity map,
but we must first generalize the notion of a zero set
to proximity spaces. Let $(X,\delta)$ be a proximity space. Then a \textit{proximally zero set} is
a set of the form $f^{-1}(0)$ for some proximity map $f:(X,\delta)\rightarrow[0,1]$.
If $\mathcal{C}$ is the Smirnov compactification of $X$, then $f$ has a unique extension to
a continuous function $\hat{f}:\mathcal{C}\rightarrow[0,1]$. Hence 
$f^{-1}(0)=\hat{f}^{-1}(0)\cap X$. Therefore the proximally zero sets
on a proximity space are precisely the sets of the form $X\cap Z$
where $Z\subseteq\mathcal{C}$ is a zero set. As a consequence, the intersection of countably
many proximally zero sets is a proximally zero set, and the union of finitely
many proximally zero sets is a proximally zero set.

The $\sigma$-algebra $\mathcal{B}^{*}(X,\delta)$ of \textit{proximally Baire sets} on a proximity space $(X,\delta)$ is the smallest
$\sigma$-algebra containing the proximally zero sets.
If $(X,\delta)$ is a proximity space with Smirnov compactification
$\mathcal{C}$, and $\mathcal{M}$ is the Baire $\sigma$-algebra on $\mathcal{C}$, then $\{R\cap X|R\in\mathcal{M}\}$
is the $\sigma$-algebra of proximally Baire sets on $X$.

\begin{remark}
Every proximally zero set is a zero set, but in general there are zero sets that are not
proximally zero sets. For example, let
$A$ be an uncountable discrete space, and let $\delta$ be the proximity induced by
the one point compactification $A\cup\{\infty\}$. It is well known that for normal spaces
the closed $G_{\delta}$-sets are precisely the zero sets, so it suffices to characterize the
closed $G_{\delta}$-subsets of $A\cup\{\infty\}$. Let $R\subseteq A\cup\{\infty\}$
be a closed $G_{\delta}$-set. If $\infty\not\in R$, then $R$ is finite. If $\infty\in R$,
then $R=\bigcap_{n}U_{n}$ for some sequence of open sets $U_{n}\subseteq A\cup\{\infty\}$,
but each $U_{n}$ is co-finite, so $R$ is co-countable. Therefore each zero set in $A\cup\{\infty\}$
is either co-finite or co-countable. Hence every proximally zero set in $A$ is either co-finite
or co-countable. We conclude that not every zero set in $A$ is a proximally zero set in $A$.
\end{remark}

\begin{theorem}
Let $(X,\delta)$ be a proximity space. Then a set $Z\subseteq X$ is a proximally zero
set if and only if there is a sequence $(Z_{n})_{n\in\mathbb{N}}$ with $Z=\bigcap_{n}Z_{n}$ and
where $\ldots Z_{n}\prec Z_{n-1}\prec\ldots\prec Z_{1}$.
\end{theorem}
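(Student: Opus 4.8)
The plan is to prove the two directions separately, using the Smirnov compactification $\mathcal{C}$ as the main bridge together with the already-established fact that the proximally zero sets are exactly the sets $X\cap Z$ with $Z\subseteq\mathcal{C}$ a zero set.

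For the forward direction, suppose $Z=f^{-1}(0)$ for a proximity map $f:(X,\delta)\to[0,1]$. I would set $Z_n=f^{-1}\bigl([0,\tfrac1n)\bigr)$, or rather, to get the strong relation $\prec$, work with closed neighborhoods: let $Z_n=f^{-1}\bigl([0,\tfrac1n]\bigr)$. Then $Z=\bigcap_n Z_n$ trivially. To see $Z_{n+1}\prec Z_n$, note that $[0,\tfrac1{n+1}]\prec[0,\tfrac1n)$ inside $[0,1]$ (indeed $[0,\tfrac1{n+1}]$ is contained in the interior of $[0,\tfrac1n]$ relative to the compact proximity on $[0,1]$, whose $\prec$ is just ``closure of the first is inside the interior of the second''), and since $f$ is a proximity map it pulls $\prec$ back to $\prec$ by the characterization of proximity maps quoted in the introduction ($f^{-1}(C)\prec_\delta f^{-1}(D)$ whenever $C\prec_\rho D$). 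Hence $Z_{n+1}=f^{-1}([0,\tfrac1{n+1}])\prec f^{-1}([0,\tfrac1n])$; after passing to the subsequence indexed so that $Z_n\prec Z_{n-1}$ holds (or simply observing the relation holds for consecutive indices), we obtain the desired decreasing $\prec$-chain. I should be slightly careful that $\prec$ as used here is $\prec_\delta$ on $X$, but the pullback property makes this immediate.

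For the converse, suppose $Z=\bigcap_n Z_n$ with $\cdots\prec Z_n\prec Z_{n-1}\prec\cdots\prec Z_1$. Passing to the Smirnov compactification $\mathcal{C}$, the relation $A\prec_\delta B$ says $A\,\overline{\delta}\,B^c$, i.e. $\operatorname{cl}_{\mathcal C}A\cap\operatorname{cl}_{\mathcal C}(B^c)=\emptyset$; in particular $\operatorname{cl}_{\mathcal C}A\subseteq\mathcal C\setminus\operatorname{cl}_{\mathcal C}(X\setminus B)=\operatorname{int}_{\mathcal C}(\operatorname{cl}_{\mathcal C}B)$ once one checks the complementation behaves as expected, so from $Z_{n+1}\prec Z_n$ we get $\operatorname{cl}_{\mathcal C}Z_{n+1}\subseteq\operatorname{int}_{\mathcal C}\operatorname{cl}_{\mathcal C}Z_n$. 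Writing $K_n=\operatorname{cl}_{\mathcal C}Z_n$, these form a decreasing sequence of closed sets in the compact (hence normal) space $\mathcal C$ with $K_{n+1}\subseteq\operatorname{int}_{\mathcal C}K_n$. Then $K=\bigcap_n K_n$ is a closed $G_\delta$ in $\mathcal C$ (it equals $\bigcap_n\operatorname{int}_{\mathcal C}K_n$), hence a zero set of $\mathcal C$ by normality, and $K\cap X$ is therefore a proximally zero set of $X$. It remains to identify $K\cap X$ with $Z$: the inclusion $Z\subseteq K\cap X$ is clear since $Z\subseteq Z_n\subseteq K_n$ and $Z\subseteq X$; for the reverse, if $x\in X$ lies in every $K_n$, I need $x\in Z_n$ for each $n$, which follows because $x\in K_{n+1}\subseteq\operatorname{int}_{\mathcal C}K_n$, and — using $Z_{n+1}\prec Z_n$ hence $x\in\operatorname{cl}_{\mathcal C}Z_{n+1}$ forces... actually the cleanest route is: $x\in X\cap\operatorname{cl}_{\mathcal C}Z_{n+1}$ and $x\notin\operatorname{cl}_{\mathcal C}(Z_n^c)$ (since these two closures are disjoint), so $x\notin Z_n^c$, i.e. $x\in Z_n$; thus $x\in\bigcap_n Z_n=Z$.

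The main obstacle I anticipate is the bookkeeping in the converse around complements inside $\mathcal C$: the identity $\operatorname{cl}_{\mathcal C}(X\setminus B)=\mathcal C\setminus\operatorname{int}_{\mathcal C}(\operatorname{cl}_{\mathcal C}B)$ is \emph{not} generally true — what is true and sufficient is that $A\,\overline\delta\,B$ in $X$ means $\operatorname{cl}_{\mathcal C}A\cap\operatorname{cl}_{\mathcal C}B=\emptyset$, so I should phrase everything in terms of disjointness of closures of the original sets $Z_{n+1}$ and $Z_n^c$ rather than trying to take interiors of closures, and then invoke normality of $\mathcal C$ to separate the disjoint closed sets $\operatorname{cl}_{\mathcal C}Z_{n+1}$ and $\operatorname{cl}_{\mathcal C}(Z_n^c)$ by open sets, producing the required interior-nesting for a cofinal subsequence. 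Once that is set up carefully, the $G_\delta$/zero-set conclusion and the identification $K\cap X=Z$ go through as sketched, and the forward direction is routine.
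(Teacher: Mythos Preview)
Your forward direction is exactly the paper's: set $Z_n=f^{-1}([0,\tfrac1n])$ and pull back $[0,\tfrac1{n+1}]\prec[0,\tfrac1n]$.

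Your converse is correct but genuinely different from the paper's. The paper stays inside $X$: from $Z_{n+1}\overline{\delta}Z_n^{c}$ it invokes the separation property to get a proximity map $f_n:(X,\delta)\to[0,1]$ with $Z_{n+1}\subseteq f_n^{-1}(0)\subseteq Z_n$, and then simply observes $\bigcap_n Z_n=\bigcap_n f_n^{-1}(0)$ is a countable intersection of proximally zero sets, hence a proximally zero set (this closure property was recorded just before the theorem). Your route instead passes to the Smirnov compactification $\mathcal{C}$, shows the closures $K_n=\operatorname{cl}_{\mathcal{C}}Z_n$ satisfy $K_{n+1}\subseteq\operatorname{int}_{\mathcal{C}}K_n$, deduces $K=\bigcap_n K_n$ is a closed $G_\delta$ in the compact (normal) space $\mathcal{C}$ hence a zero set, and identifies $K\cap X=Z$. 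Your ``cleanest route'' for the identification is right, and the density of $X$ in $\mathcal{C}$ does give $\operatorname{cl}_{\mathcal{C}}(\mathcal{C}\setminus K_n)\subseteq\operatorname{cl}_{\mathcal{C}}(Z_n^{c})$, which is what you need; so the argument goes through once written carefully. The paper's approach is shorter and uses only facts already on the page; yours leans on the compactification and the normal-space characterization of zero sets, which is natural given the paper's emphasis on $\mathcal{C}$ but brings in more machinery than necessary.
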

\begin{proof}
$\rightarrow$ If $Z\subseteq X$ is a proximally zero set, then there is a proximity
map $f:(X,\delta)\rightarrow[0,1]$ with $Z=f^{-1}(0)$. For all $n\geq 1$, we have
$[0,\frac{1}{n+1}]\prec[0,\frac{1}{n}]$, so $f^{-1}([0,\frac{1}{n+1}])\prec f^{-1}([0,\frac{1}{n}])$,
and $Z=f^{-1}(\{0\})=\bigcap_{n}f^{-1}([0,\frac{1}{n}])$.

$\leftarrow$ Suppose that $(Z_{n})_{n\in\mathbb{N}}$ is such a sequence. Then
for all $n$ we have $Z_{n+1}\overline{\delta}Z_{n}^{c}$, so
there is a proximity map $f_{n}:(X,\delta)\rightarrow[0,1]$ with
$Z_{n+1}\subseteq f_{n}^{-1}(0)$ and $Z_{n}^{c}\subseteq f_{n}^{-1}(1)\subseteq f_{n}^{-1}(0)^{c}$.
Thus $Z_{n+1}\subseteq f^{-1}_{n}(0)\subseteq Z_{n}$. Therefore
$\bigcap_{n}Z_{n}=\bigcap_{n}f_{n}^{-1}(0)$ is a proximally zero set being
the intersection of countably many proximally zero sets.
\end{proof}
\begin{corollary}
A proximity space $(X,\delta)$ is a $P_{\aleph_{1}}$-proximity space if and only if $\mathcal{M}_{\delta}$
contains each proximally zero set.

\label{PProxZero}
\end{corollary}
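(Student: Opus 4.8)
The plan is to prove both implications directly, leaning on the preceding theorem characterizing proximally zero sets as countable intersections of $\prec$-decreasing sequences, together with the basic monotonicity properties of $\prec$.

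For the forward direction, assume $(X,\delta)$ is a $P_{\aleph_{1}}$-proximity space and let $Z$ be a proximally zero set. By that characterization we may write $Z=\bigcap_{n}Z_{n}$ with $\ldots\prec Z_{n+1}\prec Z_{n}\prec\ldots\prec Z_{1}$. Since $Z\subseteq Z_{n+1}$ and $Z_{n+1}\prec Z_{n}$, property $3$ of $\prec$ gives $Z\prec Z_{n}$ for every $n$, and then the $P_{\aleph_{1}}$ hypothesis in the form ``$A\prec B_{n}$ for all $n$ implies $A\prec\bigcap_{n}B_{n}$'' yields $Z\prec\bigcap_{n}Z_{n}=Z$; that is, $Z\in\mathcal{M}_{\delta}$. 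This half is essentially immediate.

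For the converse, suppose $\mathcal{M}_{\delta}$ contains every proximally zero set, and let $A_{n}\prec B$ for all $n$; the goal is $\bigcup_{n}A_{n}\prec B$. Since $A_{n}\prec B$ means $A_{n}\overline{\delta}B^{c}$, there is a proximity map $f_{n}\colon(X,\delta)\to[0,1]$ with $f_{n}(A_{n})\subseteq\{0\}$ and $f_{n}(B^{c})\subseteq\{1\}$, so $W_{n}:=f_{n}^{-1}(0)$ is a proximally zero set with $A_{n}\subseteq W_{n}\subseteq B$; by hypothesis $W_{n}\in\mathcal{M}_{\delta}$, hence $W_{n}^{c}\in\mathcal{M}_{\delta}$. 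The crux is to show $\bigcup_{n}W_{n}\in\mathcal{M}_{\delta}$. For this, form the $\subseteq$-decreasing partial intersections $T_{n}=\bigcap_{k\le n}W_{k}^{c}$, each of which lies in the algebra $\mathcal{M}_{\delta}$; from $T_{n+1}\prec T_{n+1}$ and $T_{n+1}\subseteq T_{n}$ we get $T_{n+1}\prec T_{n}$, so $\ldots\prec T_{2}\prec T_{1}$ is exactly a sequence of the type appearing in the preceding theorem, whence $\bigcap_{n}W_{n}^{c}=\bigcap_{n}T_{n}$ is a proximally zero set and therefore belongs to $\mathcal{M}_{\delta}$. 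Complementing, $\bigcup_{n}W_{n}\in\mathcal{M}_{\delta}$, so $\bigcup_{n}W_{n}\prec\bigcup_{n}W_{n}$; since $\bigcup_{n}A_{n}\subseteq\bigcup_{n}W_{n}\subseteq B$, property $3$ of $\prec$ delivers $\bigcup_{n}A_{n}\prec B$, as required.

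The only genuine idea here is the observation used in the converse: a countable intersection of members of the algebra $\mathcal{M}_{\delta}$ is automatically a proximally zero set, because the partial intersections form a $\prec$-decreasing chain. This is what makes the hypothesis ``$\mathcal{M}_{\delta}$ contains all proximally zero sets'' strong enough to force closure under countable unions, and hence the $P_{\aleph_{1}}$ property. I expect isolating and applying this partial-intersection trick to be the main obstacle; the rest is routine manipulation with the axioms of $\prec$ and the proximity-map characterization of $\overline{\delta}$. (Alternatively, in the converse one could first check that $\delta$ is zero-dimensional and that $\mathcal{M}_{\delta}$ is a $\sigma$-algebra and then invoke Theorems~\ref{BezDual} and \ref{SigPProx}, but the direct route above sidesteps that detour.)
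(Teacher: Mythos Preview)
Your argument is correct. The forward direction is verbatim the paper's. For the converse the paper takes a shorter path: it works with the dual formulation $A\prec B_{n}$ for all $n$, picks proximally zero $Z_{n}$ with $A\subseteq Z_{n}\subseteq B_{n}$, and then simply invokes the earlier fact (established via the Smirnov compactification) that a countable intersection of proximally zero sets is again a proximally zero set, so $\bigcap_{n}Z_{n}\in\mathcal{M}_{\delta}$ and $A\subseteq\bigcap_{n}Z_{n}\prec\bigcap_{n}Z_{n}\subseteq\bigcap_{n}B_{n}$. Your partial-intersection trick instead rebuilds this closure property from the intrinsic characterization in the preceding theorem, noting that any countable decreasing chain in $\mathcal{M}_{\delta}$ is automatically $\prec$-decreasing and hence has proximally zero intersection. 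That makes your proof more self-contained (it never appeals to the Smirnov compactification), at the cost of a longer detour through complements and finite intersections; the paper's version is shorter precisely because it cashes in the closure fact already on record.
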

\begin{proof}
$\rightarrow$ Let $(X,\delta)$ be a $P_{\aleph_{1}}$-proximity space.
If $Z$ is a proximally zero set, then there is a sequence $(Z_{n})_{n\in\mathbb{N}}$ with
$Z=\bigcap_{n\in\mathbb{N}}Z_{n}$ and where $Z_{n+1}\prec Z_{n}$ for all $n$. Therefore
$Z\prec Z_{n}$ for all $n$, so $Z\prec\bigcap_{n\in\mathbb{N}}Z_{n}=Z$, so $Z\in\mathcal{M}_{\delta}$.

$\leftarrow$ Assume $\mathcal{M}_{\delta}$ contains each proximally zero set.
Assume $A\prec B_{n}$ for all $n\geq 0$. For $n\geq 0$ there is a proximally zero
set $Z_{n}$ with $A\subseteq Z_{n}\subseteq B_{n}$, so $\bigcap_{n}Z_{n}$ is a proximally
zero set, so $A\subseteq\bigcap_{n}Z_{n}\prec\bigcap_{n}Z_{n}\subseteq\bigcap_{n}B_{n}$.
\end{proof}
If $X$ is a topological space, then a \textit{cozero set} is a set of the form $f^{-1}(0,1]$ for some
continuous $f:X\rightarrow[0,1]$.
If $(X,\delta)$ is a proximity space, then a \textit{proximally cozero set} is a set of the form
$f^{-1}(0,1]$ for some proximity map $f:(X,\delta)\rightarrow[0,1]$. In other words, a cozero set is a complement of
a zero set, and a proximally cozero set is a complement of a proximally zero set.
\begin{theorem}
\begin{enumerate}
\item\label{LindCoz} Every Lindelöf open subset of a proximity space is a proximally cozero set.

\item In a Lindelöf proximity space, every cozero set is a proximally cozero set.
\end{enumerate}
\end{theorem}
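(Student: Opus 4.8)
The plan is to establish (1) directly and then deduce (2) from it. For (1), let $U$ be a Lindel\"of open subset of a proximity space $(X,\delta)$. First I would show that each point of $U$ lies in a proximally cozero set contained in $U$. If $x\in U$, then openness of $U$ gives $\{x\}\prec U$, i.e. $\{x\}\overline{\delta}U^{c}$, so there is a proximity map $g_{x}:(X,\delta)\to[0,1]$ with $g_{x}(x)=0$ and $g_{x}(U^{c})\subseteq\{1\}$. Set $Z_{x}=g_{x}^{-1}(1)$ and $C_{x}=X\setminus Z_{x}$. Since $1-g_{x}$ is again a proximity map into $[0,1]$ (it is $g_{x}$ composed with the proximity map $t\mapsto 1-t$ of the compact space $[0,1]$) and $Z_{x}=(1-g_{x})^{-1}(0)$, the set $Z_{x}$ is a proximally zero set and $C_{x}$ is a proximally cozero set; moreover $x\in C_{x}$ and $C_{x}\subseteq U$ because $U^{c}\subseteq Z_{x}$.

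Next I would invoke the Lindel\"of hypothesis. Each $C_{x}$ is open (cozero sets are open) and contained in $U$, and $\{C_{x}:x\in U\}$ covers $U$ because $x\in C_{x}$; hence there is a countable subfamily with $U=\bigcup_{n}C_{x_{n}}$. Taking complements in $X$ gives $U^{c}=\bigcap_{n}Z_{x_{n}}$, a countable intersection of proximally zero sets, which is a proximally zero set by the earlier observation that the intersection of countably many proximally zero sets is proximally zero. Thus $U$ is a proximally cozero set, which proves (1). The one genuine obstacle is precisely this passage from the (a priori uncountable) family of proximally cozero sets $C_{x}$ sitting inside $U$ to a single proximally cozero set: Lindel\"ofness collapses the union to a countable one, and countable unions of proximally cozero sets are proximally cozero.

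For (2), let $(X,\delta)$ be a Lindel\"of proximity space and let $W=f^{-1}(0,1]$ be a cozero set, with $f:X\to[0,1]$ continuous. Then $W$ is open and $W=\bigcup_{n\ge 1}f^{-1}([1/n,1])$, so $W$ is an $F_{\sigma}$ subset of $X$. Since a closed subspace of a Lindel\"of space is Lindel\"of and a countable union of Lindel\"of subspaces is Lindel\"of, $W$ is a Lindel\"of open subset of $(X,\delta)$, so (1) applies and $W$ is a proximally cozero set. It is worth noting why one cannot simply write $W^{c}=f^{-1}(0)$ as a $\prec$-decreasing countable intersection built from $f$: the function $f$ is merely continuous, not a proximity map, so the sets $f^{-1}([0,1/n])$ need not be $\prec$-related; routing through (1) via Lindel\"ofness is what repairs this.

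Finally, I note an alternative proof of (1) via the Smirnov compactification $\mathcal{C}$ of $X$: the hypothesis $\{x\}\overline{\delta}U^{c}$ for all $x\in U$ says $U\cap\textrm{cl}_{\mathcal{C}}(U^{c})=\emptyset$; one covers $U$ by cozero sets of $\mathcal{C}$ whose closures avoid $\textrm{cl}_{\mathcal{C}}(U^{c})$, extracts a countable subcover by Lindel\"ofness, lets $V\subseteq\mathcal{C}$ be its union (a cozero set of $\mathcal{C}$), and observes that $U^{c}=X\cap(\mathcal{C}\setminus V)$ is $X$ intersected with a zero set of $\mathcal{C}$, hence a proximally zero set. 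The internal argument above is slightly shorter and avoids passing to $\mathcal{C}$.
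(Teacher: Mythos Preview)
Your proof is correct and follows essentially the same approach as the paper: cover the Lindel\"of open set $U$ by proximally cozero neighborhoods of its points (obtained from the separation $\{x\}\overline{\delta}U^{c}$), extract a countable subcover, and use that countable unions of proximally cozero sets are proximally cozero; part (2) is deduced identically via the observation that a cozero set is an $F_{\sigma}$ and hence Lindel\"of in a Lindel\"of space. The only cosmetic differences are that the paper sends $x$ to $1$ and $U^{c}$ to $0$ (taking $U_{x}=f_{x}^{-1}(0,1]$ directly), and it phrases the conclusion as ``countable union of proximally cozero sets'' rather than your dual ``complement is a countable intersection of proximally zero sets''; your added Smirnov-compactification argument is a nice bonus not in the paper.
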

\begin{proof}
\begin{enumerate}
\item Let $(X,\delta)$ be a proximity space, and let $U$ be a Lindelöf open subset of $X$. Then for each
$x\in X$, there is a proximity map $f_{x}:(X,\delta)\rightarrow[0,1]$ such that $f_{x}(x)=1$ and $f_{x}(U^{c})\subseteq\{0\}$.
If $U_{x}=f_{x}^{-1}(0,1]$, then $U_{x}$ is a proximally cozero set with $x\in U_{x}\subseteq U$.
Since $\{U_{x}|x\in X\}$ covers $U$, there is a countable subcover $\{U_{x_{n}}|n\in\mathbb{N}\}$ of the set $U$.
Therefore $U=\bigcup_{n}U_{x_{n}}$ is a proximally cozero set since $U$ is the countable union
of proximally cozero sets.

\item Assume $(X,\delta)$ is a Lindelöf proximity space. If $U\subseteq X$ is a cozero set, then
$U$ is an $F_{\sigma}$-set, so the set $U$ is Lindelöf. Therefore, by \ref{LindCoz}, the set $U$ is a proximally cozero set.
\end{enumerate}
\end{proof}
In particular, in every Lindelöf proximity space, the proximally Baire sets
coincide with the Baire sets.

\begin{remark}
We may have $O\delta E$ even when $O$ and $E$ are disjoint proximally zero sets.
Furthermore, it is possible that $C\delta C^{c}$ even though $C$ and $C^{c}$ are proximally zero sets.
Give $\mathbb{Z}$ the proximity induced by the one-point compactification. Let
$C$ be the collection of even integers. Then $C^{c}$ is the collection of all odd integers.
Since $C$ and $C^{c}$ are both closed sets we have $C,C^{c}$ be disjoint proximally
zero sets. On the other hand,  $\textrm{cl}_{\mathbb{Z}\cup\{\infty\}}(C)\cap
\textrm{cl}_{\mathbb{Z}\cup\{\infty\}}(C^{c})=\{\infty\}$, so $C\delta C^{c}$.
\end{remark}
\begin{definition}
If $(X,\delta)$ is a proximity space, then let $(X,\delta)_{\aleph_{1}}=(X,\delta_{\mathcal{B}^{*}(X,\delta)})$
be the proximity space equivalent to the $\sigma$-algebra of proximally Baire sets on $X$.
\end{definition}
\begin{theorem}
Let $(X,\mathcal{N})$ be a $\sigma$-algebra, and let $(Y,\delta)$ be
a proximity space. Then a map $f:(X,\delta_{\mathcal{N}})\rightarrow(Y,\delta)$
is a proximity map if and only if $f$ is a proximity map from
$(X,\delta_{\mathcal{N}})$ to $(Y,\delta)_{\aleph_{1}}.$
\end{theorem}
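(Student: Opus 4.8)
The plan is to use the universal property that $(X,\delta)_{\aleph_1}$ comes from the \emph{smallest} $\sigma$-algebra making the identity a proximity map, together with the coreflection-style argument that any proximity map into $(Y,\delta)$ automatically factors through $(Y,\delta)_{\aleph_1}$. The backward direction is trivial: since the identity map $\mathrm{id}\colon (Y,\delta)_{\aleph_1}=(Y,\delta_{\mathcal B^*(Y,\delta)})\to (Y,\delta)$ is a proximity map, composing it with a proximity map $f\colon(X,\delta_{\mathcal N})\to(Y,\delta)_{\aleph_1}$ yields a proximity map into $(Y,\delta)$. So the content is entirely in the forward direction.

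For the forward direction, suppose $f\colon(X,\delta_{\mathcal N})\to(Y,\delta)$ is a proximity map. Because $(X,\mathcal N)$ is a $\sigma$-algebra, Theorem~\ref{SigPProx} tells us $(X,\delta_{\mathcal N})$ is a $P_{\aleph_1}$-proximity space. I would then show that the preimage under $f$ of every proximally zero set of $(Y,\delta)$ lies in $\mathcal N$. Indeed, if $Z\subseteq Y$ is a proximally zero set, write $Z=\bigcap_n Z_n$ with $\ldots\prec Z_{n+1}\prec Z_n\prec\ldots\prec Z_1$ (using the characterization theorem of proximally zero sets). Applying $f^{-1}$ and using that $f$ is a proximity map gives $\ldots\prec f^{-1}(Z_{n+1})\prec f^{-1}(Z_n)\prec\ldots$, hence $f^{-1}(Z)=\bigcap_n f^{-1}(Z_n)$ is a proximally zero set of $(X,\delta_{\mathcal N})$; since $(X,\delta_{\mathcal N})$ is a $P_{\aleph_1}$-proximity, Corollary~\ref{PProxZero} gives $f^{-1}(Z)\in\mathcal M_{\delta_{\mathcal N}}=\mathcal N$ (the last equality by Theorem~\ref{BezDual}(4)).

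Now the collection $\{R\subseteq Y : f^{-1}(R)\in\mathcal N\}$ is a $\sigma$-algebra on $Y$ (preimage commutes with complements and countable unions, and $\mathcal N$ is a $\sigma$-algebra), and we have just shown it contains every proximally zero set. Since $\mathcal B^*(Y,\delta)$ is by definition the \emph{smallest} $\sigma$-algebra containing the proximally zero sets, we conclude $f^{-1}(R)\in\mathcal N$ for every $R\in\mathcal B^*(Y,\delta)$. By Theorem~\ref{ProxMeasTran}, this says precisely that $f\colon(X,\delta_{\mathcal N})\to(Y,\delta_{\mathcal B^*(Y,\delta)})=(Y,\delta)_{\aleph_1}$ is a proximity map, as desired.

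The main obstacle — really the only subtle point — is establishing that $f^{-1}$ sends proximally zero sets to sets in $\mathcal N$; everything else is a formal manipulation of universal properties. The key insight there is to combine the sequential (nested $\prec$-chain) characterization of proximally zero sets with the fact that proximity maps preserve $\prec$, and then invoke that the domain is a $P_{\aleph_1}$-proximity to collapse the intersection back inside $\mathcal M_{\delta_{\mathcal N}}$. Once that is in hand, the "smallest $\sigma$-algebra" clause does all the remaining work automatically.
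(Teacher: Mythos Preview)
Your proof is correct and follows essentially the same route as the paper. The only cosmetic differences are: for the forward direction, the paper shows $f^{-1}(Z)$ is a proximally zero set via the composition $f^{-1}(Z)=(g\circ f)^{-1}(0)$ rather than via the nested $\prec$-chain characterization; and for the backward direction, the paper verifies directly that $C\overline{\delta}D$ implies $f^{-1}(C)\overline{\delta_{\mathcal N}}f^{-1}(D)$ (using a separating proximally zero set) rather than invoking that the identity $(Y,\delta)_{\aleph_1}\to(Y,\delta)$ is a proximity map---but these are the same argument phrased two ways.
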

\begin{proof}
$\rightarrow$ Assume that $f:(X,\delta_{\mathcal{N}})\rightarrow(Y,\delta)$
is a proximity map. If $C\subseteq Y$ is a proximally zero set, then
there is a proximity map $g:(Y,\delta)\rightarrow[0,1]$ with $C=g^{-1}(0)$. Therefore
$g\circ f$ is a proximity map as well,  $f^{-1}(C)=f^{-1}(g^{-1}(0))=(g\circ f)^{-1}(0)$
is a proximally zero set, and since $(X,\delta_{\mathcal{N}})$ is a $P_{\aleph_{1}}$-proximity space (by Theorem \ref{SigPProx}),
we obtain, using Corollary \ref{PProxZero} and Theorem \ref{BezDual}, that $f^{-1}(C)\in\mathcal{N}$ for each proximally zero set $C$.
Therefore $f^{-1}(R)\in\mathcal{N}$ for each proximally Baire set $R$.

$\leftarrow$ Now assume that $f:(X,\delta_{\mathcal{N}})\rightarrow(Y,\delta)_{\aleph_{1}}$ is a proximity
map. Then let $C,D\subseteq Y$ be sets with $C\overline{\delta}D$. Then there is a
proximally zero set $Z\subseteq Y$ with $C\subseteq Z,D\subseteq Z^{c}$. Therefore,
by Theorem \ref{ProxMeasTran}, $f^{-1}(Z)\in\mathcal{N}$, and $f^{-1}(C)\subseteq f^{-1}(Z),
f^{-1}(D)\subseteq f^{-1}(Z^{c})$, thus $f^{-1}(C)\overline{\delta_{\mathcal{N}}}f^{-1}(D)$.
\end{proof}
In particular, if $(X,\mathcal{M})$ is a $\sigma$-algebra, and $\delta$ is any proximity
on $\mathbb{R}$ that induces the Euclidean topology, then $f:(X,\mathcal{M})\rightarrow\mathbb{R}$ is measurable
if and only if $f:(X,\delta_{\mathcal{M}})\rightarrow\mathbb{R}$ is a proximity map.

\begin{theorem}
If $(X,\delta)$ is a proximity space, then the topology on $(X,\delta)_{\aleph_{1}}$
is the topology on the $P$-space coreflection of the topology on $X$.
\end{theorem}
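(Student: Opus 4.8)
The plan is to show the two topologies have the same basic open sets. Recall $(X,\delta)_{\aleph_1}$ has underlying set $X$ equipped with the proximity $\delta_{\mathcal{B}^*(X,\delta)}$, and by Theorem~\ref{BezDual}(5) the family $\mathcal{B}^*(X,\delta)$ is a basis for its topology; while the $P$-space coreflection $(X)_{\aleph_1}$ of the topology $\tau_\delta$ has the $G_\delta$-sets of $(X,\tau_\delta)$ as a basis. So it suffices to prove that a subset $U\subseteq X$ is open in $(X,\delta)_{\aleph_1}$ if and only if it is open in $(X)_{\aleph_1}$, and for this I would compare the two bases up to refinement.

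First I would verify that every proximally Baire set is a $G_\delta$-set in $(X,\tau_\delta)$: proximally zero sets are zero sets hence $G_\delta$, the $G_\delta$-sets are closed under countable intersections, and although they are not in general closed under complementation, one checks that the collection of sets which are simultaneously $F_\sigma$ and $G_\delta$ — equivalently, countable unions of cozero sets that are also countable intersections of zero sets — already contains all proximally zero sets and their complements and is a $\sigma$-algebra; hence $\mathcal{B}^*(X,\delta)$ consists of $G_\delta$-sets (indeed of ambiguous sets of class two). Consequently every $\delta_{\mathcal{B}^*}$-open set, being a union of members of $\mathcal{B}^*(X,\delta)$, is open in $(X)_{\aleph_1}$.

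For the reverse inclusion I would show each $G_\delta$-subset of $(X,\tau_\delta)$ is open in $(X,\delta)_{\aleph_1}$. Let $U=\bigcap_n U_n$ with each $U_n$ open in $\tau_\delta$, and fix $x\in U$. Since $X$ is Tychonoff (and $\delta$ is compatible), for each $n$ there is a proximity map into $[0,1]$ separating $x$ from $U_n^c$, giving a proximally cozero set $V_n$ with $x\in V_n\subseteq U_n$; then $V=\bigcap_n V_n$ is a proximally Baire set (a countable intersection of complements of proximally zero sets) with $x\in V\subseteq U$. Thus $\{x\}\subseteq V$ with $V\in\mathcal{M}_{\delta_{\mathcal{B}^*}}$, so $\{x\}\prec_{\delta_{\mathcal{B}^*}}U$, which shows $U$ is open in $(X,\delta)_{\aleph_1}$. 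Combining the two inclusions gives equality of the topologies.

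The main obstacle is the first inclusion: the naive claim ``proximally Baire sets are $G_\delta$'' is not immediate because $\sigma$-algebras are closed under complementation while the $G_\delta$-sets are not, so one cannot simply say $\mathcal{B}^*(X,\delta)$ is generated inside the $G_\delta$-sets. The fix is to identify the right ambient $\sigma$-algebra: since every proximally zero set $Z$ is both $G_\delta$ and $F_\sigma$ (its complement is a proximally cozero set, hence a countable union of proximally zero sets, hence $F_\sigma$; and $Z$ itself is a zero set, hence $G_\delta$), the proximally zero sets lie in the $\sigma$-algebra of $\tau_\delta$-Borel sets of ambiguous class two, and that $\sigma$-algebra is contained in the $G_\delta$-sets only after one more observation — namely that in this setting every such ambiguous-class-two set is in fact $G_\delta$. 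Making this last point precise, or else bypassing it by checking directly that each proximally Baire set is $G_\delta$ via a transfinite/inductive argument on the generation of $\mathcal{B}^*(X,\delta)$, is where the real care is needed; everything else is routine manipulation of the $\prec$ relation and of the basis characterizations of the two topologies.
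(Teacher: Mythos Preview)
The paper leaves this proof to the reader, so there is no argument to compare against; I will simply assess your proposal.

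Your reverse inclusion (every $G_\delta$-set of $(X,\tau_\delta)$ is open in $(X,\delta)_{\aleph_1}$) is correct as written.

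Your forward inclusion, however, is aimed at the wrong target and contains a false intermediate claim. You try to prove that every proximally Baire set is a $G_\delta$-set in $(X,\tau_\delta)$, but this is false in general: take $X=\mathbb{R}$ with its metric proximity, where the proximally Baire $\sigma$-algebra is the Borel $\sigma$-algebra, and observe that $\mathbb{Q}$ is Borel but not $G_\delta$. The same example refutes your assertion that the family of sets which are simultaneously $F_\sigma$ and $G_\delta$ is a $\sigma$-algebra: each rational singleton $\{q\}$ is both $F_\sigma$ and $G_\delta$, yet their union $\mathbb{Q}$ is not $G_\delta$. No transfinite induction will rescue this, because the conclusion you are inducting towards is not true.

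What direction~1 actually requires is weaker: every proximally Baire set must be \emph{open in $(X)_{\aleph_1}$}, i.e.\ a union of $G_\delta$-sets. The clean way to see this is as follows. Every proximally zero set is a zero set, hence closed in $(X,\tau_\delta)$ and therefore closed in $(X)_{\aleph_1}$; it is also $G_\delta$ in $(X,\tau_\delta)$, hence open in $(X)_{\aleph_1}$. Thus proximally zero sets are clopen in $(X)_{\aleph_1}$. But $(X)_{\aleph_1}$ is a $P$-space, and in any $P$-space the clopen sets form a $\sigma$-algebra (a countable union of clopen sets is open, and its complement is a $G_\delta$, hence also open). Consequently the $\sigma$-algebra $\mathcal{B}^*(X,\delta)$ generated by the proximally zero sets consists entirely of clopen subsets of $(X)_{\aleph_1}$; in particular each is open there, which is exactly what is needed.
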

\begin{proof}
The proof is left to the reader.
\end{proof}
\begin{lemma}
Let $f:(X,\delta)\rightarrow(Y,\rho)$ be a function. Then $f$ is a proximity map
if and only if whenever $g:(Y,\rho)\rightarrow[0,1]$ is a proximity map, then $g\circ f:(X,\delta)
\rightarrow[0,1]$ is a proximity map.

$\label{492t0eh}$
\end{lemma}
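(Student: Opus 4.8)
The plan is to prove the two directions separately, with essentially all the work concentrated in the reverse implication, and with the single key tool being the characterization recorded earlier in the excerpt: for any proximity space $(Z,\sigma)$, one has $A\overline{\sigma}B$ if and only if there is a proximity map $h:(Z,\sigma)\rightarrow[0,1]$ with $h(A)\subseteq\{0\}$ and $h(B)\subseteq\{1\}$.

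For the forward direction, I would simply observe that a composite of proximity maps is a proximity map: if $f$ is a proximity map and $g:(Y,\rho)\rightarrow[0,1]$ is a proximity map, then for any $A\delta B$ we have $f(A)\rho f(B)$, hence $g(f(A))\,\delta_{[0,1]}\,g(f(B))$, i.e.\ $(g\circ f)(A)\,\delta_{[0,1]}\,(g\circ f)(B)$, so $g\circ f$ is a proximity map. (Here $\delta_{[0,1]}$ denotes the unique compatible proximity on $[0,1]$.)

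For the reverse direction, assume that $g\circ f$ is a proximity map into $[0,1]$ whenever $g:(Y,\rho)\rightarrow[0,1]$ is. I would verify the $\overline{\delta}$-form of the definition of a proximity map, namely that $C\overline{\rho}D$ implies $f^{-1}(C)\overline{\delta}f^{-1}(D)$. So suppose $C\overline{\rho}D$. By the characterization applied in $(Y,\rho)$, there is a proximity map $g:(Y,\rho)\rightarrow[0,1]$ with $g(C)\subseteq\{0\}$ and $g(D)\subseteq\{1\}$. By hypothesis $g\circ f:(X,\delta)\rightarrow[0,1]$ is a proximity map, and it satisfies $(g\circ f)(f^{-1}(C))\subseteq g(C)\subseteq\{0\}$ and $(g\circ f)(f^{-1}(D))\subseteq g(D)\subseteq\{1\}$. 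Applying the characterization now in $(X,\delta)$ yields $f^{-1}(C)\overline{\delta}f^{-1}(D)$, which is exactly what is needed to conclude that $f$ is a proximity map.

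There is no real obstacle here; the only point requiring care is to make sure the direction of the implication is the one supplied by the stated characterization — that the \emph{existence} of a separating proximity map into $[0,1]$ is equivalent to $\overline{\delta}$ (as opposed to merely a continuous function, which would only give the weaker topological separation). Since the excerpt states precisely this equivalence for proximity spaces, the argument goes through verbatim, and it is worth noting that both uses of the characterization are legitimate because $(Y,\rho)$ and $(X,\delta)$ are themselves proximity spaces.
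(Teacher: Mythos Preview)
Your argument is correct and follows essentially the same route as the paper: both directions use the characterization that $A\overline{\sigma}B$ iff some proximity map into $[0,1]$ separates $A$ and $B$, with the reverse implication proceeding by pulling a separating $g$ on $(Y,\rho)$ back through $f$ to separate $f^{-1}(C)$ from $f^{-1}(D)$ in $(X,\delta)$. The only cosmetic difference is that the paper writes the inclusion as $f^{-1}(A)\subseteq (g\circ f)^{-1}(0)$ rather than $(g\circ f)(f^{-1}(A))\subseteq\{0\}$, but these are equivalent.
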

\begin{proof}
$\rightarrow$
If $f$ is a proximity map, then clearly any composition $g\circ f$ must be a proximity map.

$\leftarrow$ Now assume that each composition $g\circ f$ is a proximity map. Assume that $A,B\subseteq Y$
are sets with $A\overline{\rho}B$. Then there is a proximity map $g:(Y,\rho)\rightarrow[0,1]$ with
$A\subseteq g^{-1}(0),B\subseteq g^{-1}(1)$. Therefore
$f^{-1}(A)\subseteq f^{-1}(g^{-1}(0))=(g\circ f)^{-1}(0)$, and
$f^{-1}(B)\subseteq f^{-1}(g^{-1}(1))=(g\circ f)^{-1}(1)$, so $f^{-1}(A)\overline{\delta}f^{-1}(B)$ since
$g\circ f$ is a proximity map.
\end{proof}
We shall write $\chi_{A}$ for the characteristic function on $A$. In other words,
$\chi_{A}(A)=1$ and $\chi_{A}(A^{c})=0$.

\begin{theorem}
Let $(X,\delta)$ be a proximity space. Then the following are equivalent.

\begin{enumerate}
\item $(X,\delta)$ is a $P_{\aleph_{1}}$-proximity space.

\item If $f_{n}:(X,\delta)\rightarrow[0,1]$
is a proximity map for each $n\in\mathbb{N}$, and $f_{n}\rightarrow f$ pointwise
(here we do not assume $f$ is continuous), then
$f:(X,\delta)\rightarrow[0,1]$ is also a proximity map. 

\item For each proximity space $(Y,\rho)$, if $f_{n}:(X,\delta)\rightarrow(Y,\rho)$
is a proximity map for each $n\in\mathbb{N}$, and $f_{n}\rightarrow f$ pointwise, then
$f:(X,\delta)\rightarrow(Y,\rho)$ is also a proximity map. 

\end{enumerate}

$\label{reaefioeai}$
\end{theorem}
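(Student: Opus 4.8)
The plan is to establish the cycle $(3)\Rightarrow(2)\Rightarrow(1)\Rightarrow(3)$. The implication $(3)\Rightarrow(2)$ is immediate upon taking $(Y,\rho)$ to be $[0,1]$ with its unique compatible proximity. For $(2)\Rightarrow(1)$, I would use Corollary \ref{PProxZero}: it suffices to show every proximally zero set $Z$ belongs to $\mathcal{M}_{\delta}$, i.e. $Z\,\overline{\delta}\,Z^{c}$. Write $Z=g^{-1}(0)$ for a proximity map $g:(X,\delta)\to[0,1]$, and set $f_{n}=h_{n}\circ g$ where $h_{n}:[0,1]\to[0,1]$ is the (continuous, hence proximity) map $h_{n}(t)=\min(nt,1)$. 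Then each $f_{n}$ is a proximity map and $f_{n}\to\chi_{Z^{c}}$ pointwise, so by $(2)$ the map $\chi_{Z^{c}}:(X,\delta)\to[0,1]$ is a proximity map; since $\{0\}$ and $\{1\}$ are far in $[0,1]$ (their closures being disjoint), applying $\chi_{Z^{c}}$ gives $Z=\chi_{Z^{c}}^{-1}(0)\;\overline{\delta}\;\chi_{Z^{c}}^{-1}(1)=Z^{c}$.

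The heart of the proof is $(1)\Rightarrow(2)$. Suppose $f_{n}\to f$ pointwise with each $f_{n}:(X,\delta)\to[0,1]$ a proximity map. Since $f$ is a proximity map as soon as $f^{-1}(C)\,\overline{\delta}\,f^{-1}(D)$ for all disjoint closed $C,D\subseteq[0,1]$, fix such $C,D$, put $\varepsilon=\tfrac{1}{3}d(C,D)>0$, and let $C_{\varepsilon},D_{\varepsilon}$ denote the open $\varepsilon$-neighborhoods, so that $\overline{C_{\varepsilon}}\cap\overline{D_{\varepsilon}}=\emptyset$. Define
$$A_{n}=\bigcap_{m\geq n}f_{m}^{-1}(\overline{C_{\varepsilon}}),\qquad E_{k}=\bigcap_{m\geq k}f_{m}^{-1}(\overline{D_{\varepsilon}}).$$
Pointwise convergence yields $f^{-1}(C)\subseteq\bigcup_{n}A_{n}$ and $f^{-1}(D)\subseteq\bigcup_{k}E_{k}$ (if $f(x)\in C$ then $f_{m}(x)\in C_{\varepsilon}$ for all large $m$). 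For any $n,k$, taking $m=\max(n,k)$ gives $A_{n}\subseteq f_{m}^{-1}(\overline{C_{\varepsilon}})$ and $E_{k}\subseteq f_{m}^{-1}(\overline{D_{\varepsilon}})$; as $\overline{C_{\varepsilon}},\overline{D_{\varepsilon}}$ are disjoint closed subsets of the compact space $[0,1]$ they are far there, so the proximity map $f_{m}$ forces $A_{n}\,\overline{\delta}\,E_{k}$. Now $(1)$ enters: fixing $k$ and applying the $P_{\aleph_{1}}$ axiom to $\{A_{n}\}_{n}$ gives $\bigl(\bigcup_{n}A_{n}\bigr)\,\overline{\delta}\,E_{k}$, and applying it once more (using symmetry of $\delta$) to $\{E_{k}\}_{k}$ gives $\bigl(\bigcup_{n}A_{n}\bigr)\,\overline{\delta}\,\bigl(\bigcup_{k}E_{k}\bigr)$, whence $f^{-1}(C)\,\overline{\delta}\,f^{-1}(D)$.

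Finally, for $(1)\Rightarrow(3)$ let $f_{n}:(X,\delta)\to(Y,\rho)$ be proximity maps with $f_{n}\to f$ pointwise. By Lemma \ref{492t0eh} it suffices to show $g\circ f$ is a proximity map for every proximity map $g:(Y,\rho)\to[0,1]$; but each $g\circ f_{n}$ is a proximity map and, $g$ being continuous, $g\circ f_{n}\to g\circ f$ pointwise in $[0,1]$, so $g\circ f$ is a proximity map by the implication $(1)\Rightarrow(2)$ just proved. I expect $(1)\Rightarrow(2)$ to be the only real difficulty, the subtle point being that the rate of pointwise convergence depends on the point; the sets $A_{n}$ are arranged so that $\bigcup_{n}A_{n}$ absorbs this dependence, and the $P_{\aleph_{1}}$ hypothesis is exactly what lets us control the $\delta$-behaviour of that countable union. (Alternatively, $(1)\Rightarrow(2)$ can be obtained from the earlier remark that, for a $\sigma$-algebra $\mathcal{M}$, the proximity maps $(X,\delta_{\mathcal{M}})\to[0,1]$ are precisely the $\mathcal{M}$-measurable functions, together with the classical fact that a pointwise limit of measurable functions is measurable; but the argument above is self-contained.)
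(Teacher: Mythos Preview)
Your argument is correct, and the overall architecture matches the paper's: the same use of Corollary~\ref{PProxZero} for $(2)\Rightarrow(1)$ (your $f_{n}=h_{n}\circ g$ is a harmless variant of the paper's $f^{n}$), and the same reduction of $(1)\Rightarrow(3)$ to the scalar case via Lemma~\ref{492t0eh}. The one substantive difference is your treatment of $(1)\Rightarrow(2)$. The paper disposes of this in a single sentence: since $(X,\delta)$ is a $P_{\aleph_{1}}$-proximity, it is zero-dimensional with $\mathcal{M}_{\delta}$ a $\sigma$-algebra, so by the remark following Theorem~2.12 the proximity maps $(X,\delta)\to[0,1]$ are exactly the $\mathcal{M}_{\delta}$-measurable functions, and these are closed under pointwise limits by classical measure theory. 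You instead give a direct proximity-theoretic proof, building the sets $A_{n},E_{k}$ and invoking the $P_{\aleph_{1}}$ axiom twice. Your route is longer but self-contained (it does not import the measurable-limit theorem) and makes transparent exactly where the countable-union hypothesis is used; the paper's route is shorter but leans on the structural identification with $\sigma$-algebras established earlier. You already note this alternative at the end of your proposal, so you are aware of both options.
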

\begin{proof}
$1\rightarrow 2$
The map $f$ is a proximity map if and only if $f$ is a measurable function on the $\sigma$-algebra
$(X,\mathcal{M}_{\delta})$. The implication follows since measurable
functions are closed under pointwise convergence.

$2\rightarrow 3$
Assume that $(Y,\rho)$ is any proximity space, and assume that $f_{n}\rightarrow f$
pointwise and each $f_{n}$ is a proximity map from $(X,\delta)$ to $(Y,\rho)$. Then let $g:(Y,\rho)\rightarrow[0,1]$
be a proximity map. Then $g\circ f_{n}\rightarrow g\circ f$ pointwise, and since
each $g\circ f_{n}:(X,\mathcal{M})\rightarrow[0,1]$ is a proximity map, we have
$g\circ f$ also be a proximity map. Therefore $f$ is a proximity map by
Lemma $\ref{492t0eh}$.

$3\rightarrow 2$ This is trivial.

$2\rightarrow 1$
Assume $Z\subseteq X$ is a proximally zero set. Then there is a
proximity map $f:(X,\delta)\rightarrow[0,1]$ with $Z=f^{-1}(1)$. On the other hand, 
we have $f^{n}\rightarrow\chi_{Z}$ pointwise where $\chi_{Z}$ denotes the characteristic function, 
so $\chi_{Z}$ is a proximity map.
Therefore $Z\overline{\delta}Z^{c}$, so $Z\in\mathcal{M}_{\delta}$. Thus, using Corollary \ref{PProxZero},
we conclude that $X$ is a $P_{\aleph_{1}}$-proximity space.
\end{proof}

\begin{theorem}
Let $X$ be a Tychonoff space. Then $X$ is a $P$-space if and only if whenever
$f_{n}:X\rightarrow[0,1]$ is continuous for all $n$, and $f_{n}\rightarrow f$ pointwise,
then $f$ is continuous.
$\label{wqriu31932}$
\end{theorem}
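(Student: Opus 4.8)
The plan is to deduce this from Theorem~\ref{reaefioeai} together with the $P$-space coreflection machinery. The key observation is that the hypothesis ``pointwise limits of continuous $[0,1]$-valued functions are continuous'' is a purely topological statement about $X$, but the analogous statement in Theorem~\ref{reaefioeai}(2) is a statement about a \emph{proximity} on $X$. So the first step is to pick a compatible proximity $\delta$ on $X$ (one exists since $X$ is Tychonoff) and to reconcile the two notions of ``map into $[0,1]$.''

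First I would handle the backward direction ($\Leftarrow$). Suppose every pointwise limit of continuous maps $f_n : X \to [0,1]$ is continuous. I want to show $X$ is a $P$-space, i.e.\ every zero set is open. Given a zero set $Z = f^{-1}(0)$ with $f : X \to [0,1]$ continuous, consider $g_n = \min(nf, 1)$; these are continuous and $g_n \to \chi_{Z^c}$ pointwise (value $0$ on $Z$, eventually $1$ off $Z$), so $\chi_{Z^c}$ is continuous, whence $Z = \chi_{Z^c}^{-1}(0)$ is open. (This is exactly the ``$2 \to 1$'' trick from Theorem~\ref{reaefioeai}, done at the topological level; one could alternatively quote that theorem after noting that for the fine proximity a proximity map to $[0,1]$ is just a continuous map.)

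For the forward direction ($\Rightarrow$), suppose $X$ is a $P$-space and $f_n \to f$ pointwise with each $f_n : X \to [0,1]$ continuous. I would argue that $f$ is continuous by showing $f^{-1}(U)$ is open for $U$ ranging over a subbasis of $[0,1]$, say sets of the form $[0,a)$ and $(a,1]$. Now $f^{-1}([0,a)) = \bigcup_k f^{-1}([0, a - \tfrac1k])$, and each $f^{-1}([0,a-\tfrac1k])$ is the set $\{x : \liminf_n f_n(x) \le a - \tfrac1k\}$... — this is where one must be careful, because pointwise limits of continuous functions are only Baire class $1$, hence the preimage of a closed set is a $G_\delta$ and the preimage of an open set is an $F_\sigma$. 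Concretely, $f^{-1}((a,1]) = \bigcup_k \bigcup_N \bigcap_{n \ge N} f_n^{-1}((a + \tfrac1k, 1])$ is an $F_{\sigma\delta\sigma}$, in particular a Baire set, and in a $P$-space every $G_\delta$ is open, hence every $F_\sigma$ is closed, hence (iterating) every Baire set built from open sets by countable unions and intersections is open. So $f^{-1}$ of every open set is open, and $f$ is continuous. The main obstacle is precisely bookkeeping this ``$f$ is Baire class $1$, and in a $P$-space the Borel/Baire hierarchy over open sets collapses to the open sets'' argument cleanly — alternatively, and more in the spirit of the paper, one invokes Theorem~\ref{reaefioeai} directly: equip $X$ with a compatible proximity $\delta$, note that since $X$ is a $P$-space $\mathcal{M}_\delta$ contains every zero set (the $P$-space coreflection is $X$ itself), so by Corollary~\ref{PProxZero} $(X,\delta)$ is a $P_{\aleph_1}$-proximity, and then Theorem~\ref{reaefioeai}(1)$\Rightarrow$(2) gives that pointwise limits of proximity maps $X \to [0,1]$ are proximity maps — but proximity maps into $[0,1]$ from any compatible proximity are exactly the continuous maps, since $[0,1]$ is compact (Example~3, the unique proximity on a compact space, makes every continuous map into it a proximity map, and conversely proximity maps are continuous). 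This reduces the whole theorem to the proximity version, and the only real work is the identification ``proximity map to $[0,1]$ $=$ continuous map to $[0,1]$,'' which follows from compactness of $[0,1]$.
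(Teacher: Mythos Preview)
Your backward direction is correct and essentially identical to the paper's (the paper takes $Z=f^{-1}(1)$ and uses $f^{n}\to\chi_{Z}$; you take $Z=f^{-1}(0)$ and use $\min(nf,1)\to\chi_{Z^{c}}$ --- same trick).

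In the forward direction, your proximity-based approach contains a genuine error. The claim that ``proximity maps into $[0,1]$ from \emph{any} compatible proximity are exactly the continuous maps'' is false, and your appeal to Example~1.3 misreads it: that example says a map \emph{from} a compact space is a proximity map iff it is continuous, not a map \emph{into} one. For a concrete failure take $X=\mathbb{N}$ (discrete, hence a $P$-space) with the proximity $\delta$ from the one-point compactification; then $\chi_{2\mathbb{N}}$ is continuous but not a proximity map, and $\mathcal{M}_{\delta}$ consists only of the finite and cofinite sets, so it does not contain every zero set either. Both steps --- ``$\mathcal{M}_{\delta}$ contains all zero sets'' and ``proximity maps to $[0,1]$ coincide with continuous maps'' --- require $\delta$ to be the Stone--\v{C}ech proximity specifically. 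With that choice your argument does go through (and indeed the paper's subsequent Corollary is exactly this reduction), but as written, with an arbitrary compatible $\delta$, it fails.

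Your Baire-class-1 approach is correct once the bookkeeping is carried out, but it is heavier than necessary. The paper's forward direction rests on a single standard fact you did not invoke: in a $P$-space every continuous real-valued function is \emph{locally constant}, since $f^{-1}(\{f(x)\})$ is a $G_{\delta}$ containing $x$ and hence open. Given that, fix $x$, take for each $n$ an open $U_{n}\ni x$ on which $f_{n}$ is constant, and set $U=\bigcap_{n}U_{n}$; this is open (again by the $P$-space property) and $f$ is constant on $U$. No hierarchy computation is needed.
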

\begin{proof}
$\rightarrow$ Assume $X$ is a $P$-space. Then for each continuous
$f_{n}:X\rightarrow[0,1]$ and $x\in X$ there is an open neighborhood $U_{n}$ of $x$ where
$f_{n}(U_{n})=f_{n}(x)$. Therefore if $U=\bigcap_{n}U_{n}$, then $U$ is an open
neighborhood of $x$. Furthermore, if $f_{n}\rightarrow f$ pointwise, then
$f(U)=f(x)$. Therefore $f$ is continuous at each point $x\in X$.

$\leftarrow$ Assume $Z\subseteq X$ is a zero set. Then let
$f:X\rightarrow[0,1]$ be a continuous function where $f^{-1}(1)=Z$. Then 
$f^{n}\rightarrow\chi_{Z}$ pointwise. Therefore since $\chi_{Z}$ is continuous,
we have $Z$ be open. Therefore $X$ is a $P$-space.
\end{proof}
\begin{corollary}
If $X$ is Tychonoff and $\delta$ is the proximity induced by the
Stone-Cech compactification of $X$, then $X$ is a $P$-space if and only if $(X,\delta)$ is a
$P_{\aleph_{1}}$-proximity space.
\end{corollary}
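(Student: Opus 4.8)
\section{\bf Proof proposal for the final corollary}

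The plan is to reduce the statement to the two pointwise-convergence characterizations already established, namely Theorem~\ref{reaefioeai} for $P_{\aleph_{1}}$-proximities and Theorem~\ref{wqriu31932} for $P$-spaces. Both of these say the same thing about their respective classes: the property holds if and only if every pointwise limit of a sequence of \emph{admissible} maps $X\to[0,1]$ is again admissible, where ``admissible'' means ``proximity map'' in the first case and ``continuous'' in the second. Hence the whole corollary will follow once I show that, for the particular proximity $\delta$ induced by $\beta X$, the proximity maps $X\to[0,1]$ are \emph{exactly} the continuous maps $X\to[0,1]$.

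To obtain that identification, I first observe that since $\delta$ is the proximity induced by the compactification $\beta X$, the Smirnov compactification of $(X,\delta)$ is $\beta X$ itself; equivalently, $\delta$ is the finest compatible proximity, the one in which $A\overline{\delta}B$ iff $A$ and $B$ are separated by a continuous function $X\to[0,1]$. The compact interval $[0,1]$ carries its unique compatible proximity, whose Smirnov compactification is $[0,1]$. By the characterization of proximity maps via extensions to Smirnov compactifications recalled in Section~1, a function $f:X\to[0,1]$ is a proximity map from $(X,\delta)$ to $[0,1]$ if and only if it extends to a continuous map $\beta X\to[0,1]$. Now every proximity map is continuous, and conversely every continuous $f:X\to[0,1]$ extends to a continuous map $\beta X\to[0,1]$ by the universal property of the Stone--\v{C}ech compactification, hence is a proximity map. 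So the two classes of $[0,1]$-valued functions on $X$ coincide as sets.

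With this identification in hand the equivalence is immediate. If $(X,\delta)$ is a $P_{\aleph_{1}}$-proximity space, then by Theorem~\ref{reaefioeai} every pointwise limit of proximity maps $X\to[0,1]$ is again a proximity map; translating through the identification, every pointwise limit of continuous maps $X\to[0,1]$ is continuous, so $X$ is a $P$-space by Theorem~\ref{wqriu31932}. Reversing the argument gives the other implication.

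The only point requiring care---and the expected ``obstacle,'' though it is really a matter of bookkeeping with the Smirnov correspondence---is confirming that the proximity ``induced by $\beta X$'' is precisely the one whose Smirnov compactification is $\beta X$, and that a proximity map into the compact space $[0,1]$ is nothing more than the restriction of a continuous map $\beta X\to[0,1]$. Both are standard facts stated in the introduction; once invoked, no further computation is needed.
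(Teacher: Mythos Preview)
Your argument is correct and follows exactly the same route as the paper: identify the proximity maps $(X,\delta)\to[0,1]$ with the continuous maps $X\to[0,1]$ via the Stone--\v{C}ech/Smirnov correspondence, and then invoke Theorems~\ref{reaefioeai} and~\ref{wqriu31932}. The only difference is that you spell out the justification for that identification, whereas the paper simply asserts it.
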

\begin{proof}
The proximity maps from $(X,\delta)$ to $[0,1]$ are precisely the
continuous functions from $X$ to $[0,1]$. Therefore by theorems $\ref{reaefioeai}$ and
$\ref{wqriu31932}$ we have $X$ be a $P$-space if and only if $(X,\delta)$ is a $P_{\aleph_{1}}$-proximity space.
\end{proof}

\section{\bf Conclusions and applications}
We conclude this paper by demonstrating that it is sometimes better to consider
$\sigma$-algebras as $P_{\aleph_{1}}$-proximities since proximity spaces are often easier to work with than
$\sigma$-algebras. 

If $(X,\mathcal{M})$ is a $\sigma$-algebra, then let $L^{\infty}(X,\mathcal{M})$
denote the collection of all bounded measurable functions from $(X,\mathcal{M})$ to $\mathbb{C}$. Clearly
$L^{\infty}(X,\mathcal{M})$ is a Banach-algebra and even a $C^{*}$-algebra.
If $Y$ is a compact space, then let $C(Y)$ be the Banach-algebra which consists of all continuous
functions from $Y$ to $\mathbb{C}$. Let $\mathcal{C}$ be the Smirnov compactification of 
$(X,\delta_{\mathcal{M}})$. Then $L^{\infty}(X,\mathcal{M})$ is isomorphic as a Banach-algebra to $C(\mathcal{C})$.
One can easily show that $\mathcal{C}$ is the collection of all ultrafilters on the Boolean
algebra $\mathcal{M}$. The maximal ideal space of $L^{\infty}(X,\mathcal{M})$ is therefore homeomorphic
to the collection of all ultrafilters on $\mathcal{M}$. Furthermore, from these facts one can easily show that if
$(X,\mathcal{M},\mu)$ is a measure space, then the maximal ideal space of $L^{\infty}(\mu)$ is
homeomorphic to the collection of all ultrafilters on the quotient Boolean algebra
$\mathcal{M}/\{R\in\mathcal{M}|\mu(R)=0\}$.

\bibliographystyle{plain}

\end{document}